\documentclass[11pt,a4paper]{article}
\usepackage{etex}
\usepackage{inputenc}
\usepackage[english]{babel}
\usepackage{lmodern}					
\usepackage{textcomp}				
\usepackage{verbatim}			
\usepackage[toc,page]{appendix} 		
\usepackage{cite}
\usepackage{amsmath,					
			amsthm,
			amsfonts,
			amssymb, 
			mathtools}
\usepackage{stmaryrd}
\usepackage{physics}
\usepackage{enumitem}
\usepackage{hyperref}
\hypersetup{
colorlinks=true,
linkcolor=purple,
filecolor=magenta,
urlcolor=cyan,
citecolor=blue
}

 \usepackage{tikz-cd} 
\usetikzlibrary{tikzmark}
\usetikzlibrary{arrows}
 \usepackage{verbatim}
\definecolor{DarkGreen}{HTML}{1cad22}
\usepackage[hmargin=3cm,vmargin=2.5cm]{geometry}
\usepackage{graphicx} 
\usepackage{amsthm, amssymb, amsmath}
\usepackage{color}
\usepackage{xcolor}
\usepackage{tikz}
\usetikzlibrary{positioning}
\usepackage{tikz-cd}
\usepackage{amsmath}
\usepackage{amssymb}
\usepackage{float}
\usepackage{caption}
\usepackage{mathtools}
\usepackage{hyperref}
\usepackage{mathrsfs}

\usepackage{graphicx}
\usepackage{epsfig}
\numberwithin{equation}{section}

{\theoremstyle{definition}\newtheorem{definition}{Definition}[section]

\newtheorem{defititle}[definition]{\Title}

\newtheorem{remark}[definition]{Remark}

\newtheorem{exs}[definition]{Examples}}
\newtheorem{prop}[definition]{Proposition}
\newtheorem{proposition-definition}[definition]{Proposition-Definition}
\newtheorem{lemma}[definition]{Lemma}
\newtheorem{thm}[definition]{Theorem}
\newtheorem{cor}[definition]{Corollary}

\newtheorem*{theorem}{Theorem}

\newtheoremstyle{named}{}{}{\itshape}{}{\bfseries}{.}{.5em}{\thmnote{#3's }#1}
\theoremstyle{named}

\usepackage{amsthm}

\newenvironment{mainthm-env}[1]
  {\begin{theorem}[Main Theorem #1]}
  {\end{theorem}}

\title{Remarks on Topological Rigidity of Real Moment-Angle Manifolds}

\author{Ioannis Gkeneralis\footnote{Department of Mathematics, Aristotle University of Thessaloniki, \href{mailto:igkeneralis@math.auth.gr}{igkeneralis@math.auth.gr}}}

\date{ }

\newcommand{\keywords}[1]{\textbf{Keywords: } #1}
\newcommand{\msc}[1]{\textbf{Mathematics Subject Classification: } #1}

\begin{document}

\vspace{-0.5cm}

\maketitle

\begin{abstract}
We study topological rigidity of real moment-angle manifolds associated to flag simplicial complexes. Using the cubical geometry arising from the Davis construction, we identify the universal cover with the Davis complex and deduce that it admits a CAT(0) metric. As a consequence, its fundamental group satisfies the Farrell--Jones conjecture. Applying surgery theory, we deduce that real moment-angle manifolds of dimension at least five associated to flag complexes satisfy the Borel Conjecture. We also explain why this rigidity phenomenon is specific to the real case and fails for complex and quaternionic moment-angle complexes.
\end{abstract}

\noindent \keywords{flag complex, asphericity, CAT(0) geometry, Farrell-Jones conjecture, Borel conjecture} \\[1ex]
\msc[2020]{Primary  57S25; Secondary 57R67, 20F67, 55U10, 51F15 }

\vspace{0.5ex}
\noindent \textbf{Conflict of Interest and Data Availability Statement:} The author states that there is no conflict of interest to declare. Moreover, data sharing is not applicable to this article as no datasets were generated or analyzed during the research carried out in this paper.


\section{Introduction}

Moment-angle manifolds arise naturally in toric topology as spaces encoding the combinatorics of simplicial complexes and simple polytopes. Both the complex and real constructions are instances of polyhedral products and exhibit rich combinatorial, equivariant, and cohomological structures. For details see \cite{bbhg,tt} and references therein.

However, their large-scale geometric behavior differs significantly. In the complex case, moment-angle manifolds are closely related to toric varieties and are typically studied using equivariant and cohomological methods \cite{dj}. In contrast, in the real case, additional geometric features emerge under suitable combinatorial assumptions. In particular, when the underlying simplicial complex is flag, real moment-angle manifolds become aspherical and admit geometric structures that place them within the framework of CAT(0) geometry and geometric group theory.

Let $K$ be a finite simplicial complex on $m$ vertices and let
\[
\mathcal{R}_K = Z_K(D^1,S^0)
\]
denote the associated real moment-angle complex (see e.g. \cite{bbhg}). Here $Z_K(D^1,S^0)$ denotes the polyhedral product determined by $K$ and the pair $(D^1,S^0)$, where $D^1=[-1,1]$ is the unit interval and
$S^0=\{\pm 1\}$ is its boundary. Explicitly,
\[
\mathcal{R}_K =
\bigcup_{I \in K}
\left(
\prod_{i \in I} D^1 \times \prod_{j \notin I} S^0
\right),
\]
which realizes $\mathcal{R}_K$ as a cubical subcomplex of $[-1,1]^m$ whose combinatorial structure is determined by $K$. When $K$ is the boundary of a simplicial polytope dual to a simple polytope, $\mathcal{R}_K$ is a closed manifold.

A fundamental result due to Davis \cite{dav, rafca} implies that $\mathcal{R}_K$ is aspherical if and only if $K$ is a flag complex. In this case, the topology of $\mathcal{R}_K$ is governed by the geometry of its fundamental group, placing these manifolds within the scope of the Borel Conjecture. In particular, any homotopy equivalence between such manifolds is homotopic to a homeomorphism. In this note, we prove the following: 

\begin{theorem}[Theorem \ref{thm:Borel_main}]
Let $K$ be a finite flag simplicial complex such that $\mathcal{R}_K$ is a closed manifold of dimension $n \geq 5$. Then $\mathcal{R}_K$ is aspherical and satisfies the Borel Conjecture. In particular, any homotopy equivalence
\[
f \colon N \longrightarrow \mathcal{R}_K
\]
from a closed manifold $N$ is homotopic to a homeomorphism.
\end{theorem}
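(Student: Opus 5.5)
The argument runs in four steps: asphericity, a CAT(0) structure on the universal cover, the Farrell--Jones conjecture for $\pi_1(\mathcal{R}_K)$, and then the surgery exact sequence.

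\textbf{Step 1: the cubical structure and the universal cover.} Give $\mathcal{R}_K$ the cubical structure it inherits from $[-1,1]^m$. The link of any vertex $\varepsilon\in\{\pm1\}^m$ is a copy of $K$. The cubes containing $\varepsilon$ are exactly the faces $\prod_{i\in I}D^1\times\{\varepsilon_j\}_{j\notin I}$ with $I\in K$, so their directions at $\varepsilon$ correspond bijectively to simplices of $K$.

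By Gromov's link condition, a cube complex with the piecewise Euclidean metric (unit cubes) is locally CAT(0) exactly when every vertex link is flag. Hence, when $K$ is flag, $\mathcal{R}_K$ is a compact nonpositively curved cube complex. The Cartan--Hadamard theorem then makes its universal cover $\widetilde{\mathcal{R}_K}$ a complete CAT(0) space, and therefore contractible.

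This recovers Davis's asphericity statement. It also identifies $\widetilde{\mathcal{R}_K}$ with the Davis complex $\Sigma$ of the right-angled Coxeter group $W_K$. Concretely, $\mathcal{R}_K=\Sigma/\Gamma$, where $\Gamma=\ker(W_K\to(\mathbb{Z}/2)^m)$ is the commutator subgroup, a torsion-free finite-index subgroup. So $\pi_1(\mathcal{R}_K)\cong\Gamma$ acts freely, properly, cocompactly and isometrically on the CAT(0) space $\Sigma$.

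\textbf{Step 2: Farrell--Jones.} By Bartels--Lück, the $K$- and $L$-theoretic Farrell--Jones conjectures (with coefficients, so the result passes to subgroups) hold for CAT(0) groups, that is, groups acting properly and cocompactly by isometries on a finite-dimensional CAT(0) space. Apply this to $\Gamma$ acting on $\Sigma$. Alternatively, apply it to $W_K$ and restrict to the finite-index subgroup $\Gamma$.

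Because $\Gamma$ is torsion-free, the consequences are concrete:
\begin{itemize}
\item $\mathrm{Wh}(\Gamma)=0$, $\widetilde K_0(\mathbb{Z}\Gamma)=0$ and $K_{-i}(\mathbb{Z}\Gamma)=0$, so decorations on $L$-groups are irrelevant;
\item the assembly map $H_*(B\Gamma;\mathbf{L}\langle1\rangle)\to L_*(\mathbb{Z}\Gamma)$ is an isomorphism in degrees $\geq n+1$ and injective in degree $n$, in the form needed for the structure set.
\end{itemize}

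\textbf{Step 3: surgery.} For a closed aspherical $n$-manifold $M$ with $n\ge5$, compare the surgery exact sequence with the assembly map, in Ranicki's algebraic form. Via $M\simeq B\Gamma$, the sequence shows that the topological structure set $\mathcal{S}^{TOP}(M)$ is trivial. Since $\mathrm{Wh}(\Gamma)=0$, the $s$-cobordism theorem converts this into the statement that every homotopy equivalence $f\colon N\to\mathcal{R}_K$ from a closed manifold is homotopic to a homeomorphism. This is the standard implication ``Farrell--Jones $\Rightarrow$ Borel in dimensions $\ge5$'' (Bartels--Lück, Lück's survey), and I would cite it rather than reprove it.

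\textbf{Main obstacle.} I expect the most care to be needed in Step 1, in pinning down the identification $\widetilde{\mathcal{R}_K}\cong\Sigma$ and the precise statement of the link condition. One must check that vertex links in $\mathcal{R}_K$ are genuinely isomorphic to $K$ as simplicial complexes, with no extra identifications that would spoil the flag property. This holds because every cube containing a given vertex is embedded in $[-1,1]^m$.

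The remaining steps reduce to citing deep theorems. The only subtlety there is the low-degree assembly statement in Step 3, namely that it is the $\mathbf{L}\langle1\rangle$-assembly that must be an isomorphism. This is handled in the literature for aspherical manifolds whose fundamental group satisfies both the $K$- and $L$-theoretic Farrell--Jones conjectures.
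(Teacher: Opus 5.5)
Your proof is correct and follows the same overall chain as the paper: nonpositive curvature, then Bartels--L\"uck, then the surgery exact sequence. Step~1, however, is organized differently.

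The paper gets asphericity separately, from Davis's polyhedral-product criterion (Corollary~\ref{cor:real_MA_aspherical}). It gets the CAT(0) space by first identifying $\widetilde{\mathcal{R}_K}$ with the basic construction, i.e.\ the Davis complex $\Sigma_K$ (Proposition~\ref{prop:universal-cover-davis}), and only then checking Gromov's link condition on $\Sigma_K$. You instead check the link condition directly on the finite cube complex $\mathcal{R}_K$ and apply Cartan--Hadamard. That gives the CAT(0) universal cover, asphericity, and the geometric deck-transformation action of $\pi_1$ all at once. The identification with $\Sigma$ and with the commutator subgroup of $W_K$ then becomes an optional remark rather than a load-bearing step.

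Your route is more economical and more robust. It does not depend on setting up the basic construction correctly; the chamber there should be $\mathcal{R}_K\cap[0,1]^m\cong\mathrm{cc}(K)$, not all of $\mathcal{R}_K$. The paper's route has the advantage of making the Coxeter-theoretic picture explicit. Through Davis's criterion it also gives the converse (non-flag implies non-aspherical), though that is not needed for the theorem.

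In Step~3 you are more careful than the paper. Its Lemma~\ref{lem:FJ_Borel} assumes only the $L$-theoretic conjecture. As you say, one also needs $\mathrm{Wh}(\Gamma)$, $\widetilde K_0(\mathbb{Z}\Gamma)$ and $K_{-i}(\mathbb{Z}\Gamma)$ to vanish, both to handle decorations and to pass from $s$-cobordism to homeomorphism. This is supplied by the low-degree $K$-theoretic part of Bartels--L\"uck. Strictly, Bartels--L\"uck prove the $K$-theoretic conjecture for CAT(0) groups only in degrees $\le 1$, with the general case due to Wegner, but degrees $\le 1$ are all you use.
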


The proof combines ideas from toric topology, geometric group theory, and surgery theory. The central mechanism is a geometric realization of $\mathcal{R}_K$ as a quotient of a cubical complex arising from the Davis construction.

More precisely, one associates to $K$ a cubical complex whose universal cover is naturally identified with the Davis complex of the associated right-angled Coxeter group $W_K$. The link of each vertex in this complex is isomorphic to $K$. When $K$ is flag, Gromov's link condition \cite{gromov} implies that the universal cover is a CAT(0) space (see also \cite[p.212]{BridsonHaelfiger}). In particular, it is contractible, and $\mathcal{R}_K$ is aspherical.

The fundamental group $\pi_1(\mathcal{R}_K)$ acts properly, cocompactly, and isometrically on this CAT(0) space (Proposition \ref{prop:cat0_action}). It follows from a theorem of Bartels and L\"uck \cite{BartelsLuck2012} that $\pi_1(\mathcal{R}_K)$ satisfies the Farrell--Jones conjecture in $L$-theory. The desired rigidity statement then follows from standard consequences of surgery theory.

The asphericity of real moment-angle complexes associated to flag complexes ultimately goes back to the work of Davis on polyhedral products and reflection group constructions \cite{rafca}, while the CAT(0) geometry of the associated cubical complexes follows from Gromov's criterion for nonpositive curvature \cite{gromov} and Moussong's theorem on Coxeter groups \cite{moussong}.

We emphasize that the rigidity phenomenon described here is specific to the real case. For complex and quaternionic moment-angle complexes, the relevant pairs $(D^2,S^1)$ and $(D^4,S^3)$ fail the asphericity conditions in Davis' criterion for polyhedral products. As a result, the flag condition on $K$ does not imply asphericity in those settings, and the CAT(0) methods used here do not apply.

\paragraph*{Related rigidity results.}
Rigidity phenomena for manifolds arising from Coxeter and reflection group constructions have also been studied from an equivariant perspective. In particular, Rosas~\cite{Rosas88} proved that for right-angled reflection group actions on Coxeter manifolds, any equivariant homotopy equivalence is equivariantly homotopic to a homeomorphism. More generally, Prassidis and Spieler  \cite{prasspieler} established analogous equivariant rigidity results under suitable hypotheses on Coxeter group actions.

By contrast, recent work of Stark~\cite{stark} shows that topological rigidity fails in general for certain quotients of the Davis complex, providing counterexamples in closely related Coxeter-theoretic situations. On the positive side, Wu~\cite{wu} identified classes of quotients of the Davis complex for which topological rigidity can be recovered. Together, these results show that rigidity in the Coxeter and Davis complex setting is subtle and cannot be expected in complete generality.

The rigidity result obtained in the present paper is of a different nature and can be seen as complementary to these works. Rather than assuming the presence of a compatible group action, we establish topological rigidity in the non-equivariant sense: any homotopy equivalence between manifolds is homotopic to a homeomorphism. Our argument relies on the CAT(0) geometry of the universal cover and the Farrell--Jones conjecture, and depends only on the fundamental group.

Thus, the present work should be viewed as complementary to the equivariant rigidity theory for Coxeter group actions: while the latter uses reflection group techniques and fundamental chambers, our approach shows that for real moment-angle manifolds associated to flag complexes, rigidity is already encoded in the large-scale geometry of the fundamental group.

The main contribution of this paper is to identify a natural class of manifolds arising in toric topology for which the methods of geometric group theory and surgery theory apply. While most spaces studied in toric topology, such as quasitoric and related manifolds, are not aspherical and therefore lie outside the scope of the Borel Conjecture, we show that real moment-angle manifolds associated to flag complexes form a distinguished subclass where rigidity phenomena do occur.

Thus, the result provides a bridge between toric topology and rigidity theory, showing that real moment-angle manifolds in the flag case form a natural geometric class where the Borel Conjecture holds, in contrast to the complex and quaternionic cases where asphericity fails and such methods do not apply.

\paragraph*{Organization of the paper.}

The paper is organized as follows. In Section~\ref{sec:davis}, we recall the construction of real moment-angle manifolds via polyhedral products and establish their asphericity in the flag case. In Section~\ref{sec:cat0}, we describe the associated cubical complex and prove that its universal cover is CAT(0). In Section~\ref{sec:FJ}, we apply the Farrell--Jones conjecture and surgery theory to deduce topological rigidity.

\paragraph*{Acknowledgments.}
The author is grateful to S.~Prassidis for his guidance and mentorship during the author’s PhD studies. His encouragement to relate equivariant rigidity phenomena to the Farrell--Jones conjecture was instrumental in the development of this work. The author also thanks P.~Batakidis for helpful discussions and comments on earlier versions of this paper.

\section{Real moment--angle complexes and asphericity}
\label{sec:davis}

\subsection{Real moment--angle complexes}
\label{sec:real_MA}

Let $K$ be a simplicial complex on the vertex set $[m]=\{1,\dots,m\}$. Following \cite{tt}, we recall that the \emph{real moment--angle complex} associated to $K$ is a subspace of the cube $[-1,1]^m \subset \mathbb{R}^m$ defined as follows.

Consider the map
\[
\rho : [-1,1]^m \longrightarrow [0,1]^m,
\qquad
(u_1,\dots,u_m) \longmapsto (u_1^2,\dots,u_m^2),
\]
and let $\mathrm{cc}(K) \subset [0,1]^m$ denote the cubical complex associated to $K$. The real moment--angle complex is defined as the pullback
\[
\mathcal{R}_K := \rho^{-1}(\mathrm{cc}(K)).
\]

Equivalently, $\mathcal{R}_K$ fits into the pullback diagram
\[
\begin{tikzcd}
\mathcal{R}_K \arrow[r] \arrow[d]
&
{[-1,1]}^m \arrow[d,"\rho"]
\\
\mathrm{cc}(K) \arrow[r]
&
{[0,1]}^m.
\end{tikzcd}
\]

By construction, $\mathcal{R}_K$ is invariant under the coordinatewise action of the group $(\mathbb{Z}_2)^m = \{\pm1\}^m$ on $[-1,1]^m$, and the quotient
$\mathcal{R}_K / (\mathbb{Z}_2)^m$ is homeomorphic to $|\mathrm{cone}\,K|$.

\medskip

\paragraph*{Polyhedral product description.}

The space $\mathcal{R}_K$ admits an equivalent description as a polyhedral
product:
\[
\mathcal{R}_K = Z_K(D^1,S^0),
\]
where $D^1=[-1,1]$ and $S^0=\{\pm1\}$.

Explicitly, if $K$ is a simplicial complex on the vertex set $[m]=\{1,\dots,m\}$, then
\[
\mathcal{R}_K
=
\bigcup_{I\in K}
\left(
\prod_{i\in I} D^1
\times
\prod_{j\notin I} S^0
\right),
\]
where the union is taken over all simplices $I \subseteq [m]$ of $K$. In other words, for each simplex $I$ of $K$, we allow the coordinates indexed by $I$ to vary in $D^1=[-1,1]$, while the remaining coordinates are restricted to $S^0=\{\pm1\}$. This realizes $\mathcal{R}_K$ as a cubical subcomplex of $[-1,1]^m$.

\medskip
If $K$ is a triangulation of an $(n-1)$--dimensional sphere, then $\mathcal{R}_K$ is a closed topological manifold of dimension $n$ (see \cite[Theorem 4.1.7]{tt}). In particular, when $K$ is the boundary of the simplicial polytope dual to a simple polytope $P$, the space $\mathcal{R}_K$ is called a \emph{real moment--angle manifold}.

\begin{exs}(cf. \cite[Examples 4.1.6]{tt})

\textbf{(1) Boundary of a simplex.}
Let $K = \partial \Delta^{m-1}$ be the boundary of the standard simplex. Then \(\mathcal{R}_K \cong S^{m-1},\) the boundary of the cube $[-1,1]^m$.

\medskip

\textbf{(2) Discrete complex.}
Let $K$ consist of $m$ disjoint vertices (no edges). Then \(\mathcal{R}_K\) is the $1$--skeleton of the cube $[-1,1]^m$.

\medskip

\textbf{(3) Skeleta of the simplex.}
Let $K = \mathrm{sk}_i \Delta^{m-1}$ be the $i$--skeleton of the simplex. Then $\mathcal{R}_K$ is the $(i+1)$--skeleton of the cube $[-1,1]^m$.
\end{exs}

\subsection{Asphericity and flag complexes}
\label{sec:flag_aspherical}

The geometry of the real moment--angle complex $\mathcal{R}_K$ is governed
by the combinatorics of the simplicial complex $K$. A central role is played
by the class of flag complexes.

\begin{definition}[cf. {\cite[Definition 2.3.9]{tt}}]
A simplicial complex $K$ is called \emph{flag} if every finite set of vertices
which are pairwise connected by edges spans a simplex.
\end{definition}

Equivalently, $K$ is flag if it is determined by its $1$--skeleton, or,
equivalently, if every missing face of $K$ has cardinality two.

\medskip

\begin{exs}
\begin{enumerate}
    \item[(1)] The order complex of any poset is a flag complex.
    \item[(2)] A simple polytope $P$ is flag if and only if its nerve complex $K_P$ is flag.
    \item[(3)] The boundary of a $5$--gon is a flag complex.
    \item[(4)] The boundary of a simplex $\partial \Delta^d$ is \textit{not flag} for $d \geq 2$.
\end{enumerate}
\end{exs}

\medskip

We now recall the general asphericity criterion for polyhedral products due to Davis \cite{rafca}. Let $K$ be a simplicial complex on the vertex set $I=\{1,\dots,m\}$, and let
\[
(A,B)=\{(A(i),B(i))\}_{i\in I}
\]
be a collection of pairs of connected spaces. The associated polyhedral product $Z_K(A,B)$ is defined as a subspace of $\prod_{i\in I} A(i)$ by
\[
Z_K(A,B)
=
\bigcup_{J\in K}
\left(
\prod_{i\in J} A(i)\times \prod_{i\notin J} B(i)
\right),
\]
where the union is taken over all simplices $J\subseteq I$ of $K$. In general, the pairs $(A(i),B(i))$ are arbitrary, and the special cases of interest arise by taking $(D^n,S^{n-1})$.

\begin{thm}[Davis {\cite[Theorem 2.22]{rafca}}]
\label{thm:Davis_aspherical}
The polyhedral product $Z_K(A,B)$ is aspherical if and only if the following
conditions hold:
\begin{enumerate}
    \item[(i)] each $A(i)$ is aspherical;
    \item[(ii)] each path component of $B(i)$ is aspherical and the inclusion $\pi_1(B(i)) \to \pi_1(A(i))$ is injective;
    \item[(iii)] the simplicial complex $K$ is flag.
\end{enumerate}
\end{thm}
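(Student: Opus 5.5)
We would prove both directions by working with the homotopy fibre of the inclusion $Z_K(A,B)\hookrightarrow \prod_{i} A(i)$. The tool for necessity is the \emph{full subcomplex retraction}. If $L\subseteq K$ is the full subcomplex on a vertex set $J$, then choosing basepoints $b_i\in B(i)$ for $i\notin J$ exhibits $Z_L(A,B)$ as a retract of $Z_K(A,B)$: include it by putting $b_i$ in the coordinates $i\notin J$, and retract by projecting onto the coordinates in $J$. A retract of an aspherical space is aspherical, since $\pi_k$ of the retract injects into $\pi_k$ of the whole space.

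\textbf{Necessity.} We take $L$ successively to be a vertex, a pair of vertices, and the boundary of a minimal non-face.
\begin{enumerate}
\item[(a)] For $J=\{i\}$ we get $Z_L(A,B)=A(i)$, which gives (i).
\item[(b)] Suppose $K$ is not flag. Pick a minimal missing face $\sigma$ with $|\sigma|=k\ge 3$; its full subcomplex is $\partial\sigma$. Set $F_i=\mathrm{hofib}(B(i)\to A(i))$. There is a standard fibration
\[
\bigl(F_1*\cdots*F_k\bigr)\to Z_{\partial\sigma}(A,B)\to \textstyle\prod_{i\in\sigma}A(i),
\]
up to homotopy. The join of $k$ nonempty spaces is $(k-2)$-connected and has nontrivial $H_{k-1}$. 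Its $\pi_{k-1}$, where $k-1\ge 2$, is therefore nonzero by Hurewicz. Since the base is aspherical, this higher homotopy survives into $Z_{\partial\sigma}$, which contradicts asphericity. This gives (iii).
\item[(c)] For (ii), suppose first that $K$ has two non-adjacent vertices. The same fibration for the discrete two-point complex gives fibre $F_1*F_2$. This join is aspherical (a graph up to homotopy) exactly when each $F_i$ is homotopy discrete. That condition is equivalent to each component of $B(i)$ being aspherical with $\pi_1$ injecting into $\pi_1(A(i))$, via the long exact sequence of $B(i)\to A(i)$.
\end{enumerate}
If every pair of vertices of $K$ is adjacent, then flagness forces $K$ to be a simplex and $Z_K=\prod A(i)$. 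In that degenerate case condition (ii) cannot be detected this way, so the statement must be read with Davis's standing conventions. I would check the precise hypotheses in \cite{rafca} on this point.

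\textbf{Sufficiency.} Assume (i)--(iii). Each $A(i)$ is a $K(G_i,1)$ and each component of $B(i)$ is a $K(H,1)$ with $H\le G_i$. I would pass to a covering of $Z_K(A,B)$ built from the universal covers $\widetilde{A(i)}$, glued along the lifts of the $B(i)$-components. Concretely, I would form a ``Davis-type'' space
\[
\mathcal U=\bigl(\mathcal C\times\Gamma\bigr)/\!\sim,
\]
where $\mathcal C$ is a chamber built from the products $\prod_{i\in J}\widetilde{A(i)}\times\prod_{i\notin J}\widetilde{B}$-pieces, $J\in K$, and $\Gamma$ is a graph-product-like group of cosets. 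I would then show that $\mathcal U$ is simply connected and that $\mathcal U$ is contractible.

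The contractibility is the main obstacle. The approach I would try first is to fill $\mathcal U$ by an increasing union of contractible pieces, adding one chamber at a time in an order given by word length in $\Gamma$. Each new chamber meets the previous union in a subspace of the form $Z_{\mathrm{lk}}(\cdots)$, indexed by the set of letters that reduce the word length. Flagness guarantees that this set spans a simplex of $K$. Hence the intersection is a product of pieces each of which is contractible, namely $\widetilde{A(i)}$ or cones, so the intersection is itself contractible. This is exactly where (iii) enters, and it mirrors the role of Gromov's link condition in the $(D^1,S^0)$ case. Once contractibility is established, $Z_K(A,B)$ is aspherical. As a cross-check, in the case $(D^1,S^0)$ the space $\mathcal U$ is the Davis complex of $W_K$.
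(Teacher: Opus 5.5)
The paper gives no proof of this statement; it quotes it from Davis. Judged on its own, your necessity argument has a genuine gap. Both facts about joins that it relies on are false, and they fail exactly where the statement as transcribed is false.

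\textbf{Necessity.} In (b), $H_{k-1}(F_1*\cdots*F_k)\cong\tilde H_0(F_1)\otimes\cdots\otimes\tilde H_0(F_k)$. This vanishes as soon as one $F_i$ is path connected, and the join is contractible if one $F_i$ is.
\begin{itemize}
\item For (iii): take $(A(i),B(i))=(D^1,\{1\})$ at every vertex. Then (i) and (ii) hold, but $Z_K(D^1,\{1\})$ is star-shaped about $(1,\dots,1)$, hence contractible for every $K$, flag or not.
\item For (ii), your claim that ``$F_1*F_2$ is aspherical iff both $F_i$ are homotopy discrete'' fails for the same reason. With $K$ two points and pairs $(D^1,\{1\})$, $(D^3,S^2)$ one gets $Z_K=D^1\times S^2\cup\{1\}\times D^3\cong D^3$. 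This is contractible, although $S^2$ is not aspherical and vertex $2$ is not conelike.
\item Even nondegenerate pairs escape. Since $F_1*F_2\simeq\Sigma(F_1\wedge F_2)$, asphericity only forces the components of $F_1\wedge F_2$ to be acyclic. For $P$ the punctured Poincar\'e sphere, $Z_K(\mathrm{Cone}\,P,P)=P*P$ is contractible, while $P$ is not aspherical.
\end{itemize}
All these pairs consist of connected spaces, so they are counterexamples even under the paper's convention.

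Moreover, (ii) is invisible at every conelike vertex, not only when $K$ is a simplex. If $K$ is flag and $i$ is conelike, then $K=i*(K-i)$ and $Z_K=A(i)\times Z_{K-i}$.

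So the ``only if'' half cannot be proved without extra hypotheses. At least three are needed: (ii) only at non-conelike vertices, no $B(i)\to A(i)$ a homotopy equivalence, and something excluding acyclic examples like the one above.

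Once (ii) is available and the pairs are nondegenerate, your argument for (iii) does work. The $F_i$ are then homotopy discrete with at least two points each, so $F_1*\cdots*F_k$ is a nontrivial wedge of $(k-1)$-spheres. Its $\pi_{k-1}$ survives into $Z_{\partial\sigma}$ because the base is aspherical. So (ii) must come before (iii).

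\textbf{Sufficiency.} Your sketch has the right shape (a Davis-complex/right-angled-building argument), but $\mathcal U$ is not well defined as written. The cosets $\pi_1(A(i))/\pi_1(B(i))$ do not form a group unless the subgroup is normal. Also, gluing chambers made of $\widetilde{A(i)}$'s along lifts of $B(i)$ is the wrong picture: in the model case $(D^1,S^0)$ the chambers are glued along the cone points $\{x_i=0\}$, not along $S^0$.

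A clean version runs as follows.
\begin{enumerate}
\item The homotopy fibre of $Z_K(A,B)\to\prod A(i)$ is $Z_K(\mathrm{Cone}\,F,F)$. By (i) and (ii) it therefore suffices to show $Z_K(\mathrm{Cone}\,F,F)$ is aspherical for discrete sets $F_i$.
\item Give each $F_i$ an arbitrary group structure $G_i$. With the cone point at $x_i=0$, $Z_K(\mathrm{Cone}\,G,G)=U(\prod G_i,\mathrm{cc}(K))$ with mirrors $\{x_i=0\}$.
\item This space is covered by $U(\Gamma,\mathrm{cc}(K))$, where $\Gamma$ is the graph product of the $G_i$ over $K^{(1)}$. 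The kernel of $\Gamma\to\prod G_i$ meets every conjugate of $\prod_{i\in T}G_i$ trivially, so it acts freely.
\item Your word-length filtration now goes through with the contractible chamber $\mathrm{cc}(K)$. A new chamber meets the earlier ones in the union of the mirrors indexed by its set $T$ of final letters. $T$ is a clique, hence a simplex by flagness, and that union is contractible by the nerve lemma.
\end{enumerate}
This, rather than a ``product of contractible pieces'', is where (iii) enters.
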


In the case of real moment--angle complexes, the pair $(A,B)=(D^1,S^0)$ satisfies the first two conditions: $D^1$ is contractible, and $S^0$ is discrete, hence aspherical with trivial fundamental group.

Combining Theorem~\ref{thm:Davis_aspherical} with the identification $\mathcal{R}_K = Z_K(D^1,S^0)$, we obtain the following, which can be considered a special case of \cite[Proposition 4.5]{rafca}.

\begin{cor}
\label{cor:real_MA_aspherical}
The real moment--angle complex $\mathcal{R}_K$ is aspherical if and only if $K$ is a flag complex.
\end{cor}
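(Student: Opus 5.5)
The plan is to apply Theorem~\ref{thm:Davis_aspherical} directly with the constant family of pairs $(A(i),B(i))=(D^1,S^0)$ for all $i\in[m]$, using the identification $\mathcal{R}_K = Z_K(D^1,S^0)$ recalled in Section~\ref{sec:real_MA}. The corollary then follows once conditions (i) and (ii) are checked for this pair; condition (iii) is exactly the flag hypothesis, giving both directions of the equivalence at once.

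For (i), $D^1=[-1,1]$ is contractible, so all its homotopy groups vanish and it is aspherical. For (ii), each path component of $S^0=\{\pm1\}$ is a point, hence aspherical with trivial fundamental group. The map $\pi_1(\{\pm1\},x)\to\pi_1(D^1,x)$ is therefore a map from the trivial group, and it is injective for either choice of basepoint.

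With (i) and (ii) in hand, Davis' criterion says $Z_K(D^1,S^0)$ is aspherical if and only if $K$ is flag. Transporting this along the equality $\mathcal{R}_K=Z_K(D^1,S^0)$ gives the corollary.

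The only subtle point is a hypothesis mismatch. In the statement of Theorem~\ref{thm:Davis_aspherical} as recalled, the pairs are assumed to be pairs of \emph{connected} spaces, but $S^0$ is disconnected. I would resolve this by appealing to the precise form of Davis' result in \cite{rafca}, where $B(i)$ is allowed to be disconnected and condition (ii) is imposed on each path component; this is why (ii) is phrased per component. It is also why the paper cites \cite[Proposition 4.5]{rafca}, which treats the case $(D^1,S^0)$ directly. If needed, I would invoke that proposition as the primary reference and view the above verification as a consistency check.

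A second point to address is what ``aspherical'' means for a possibly disconnected $\mathcal{R}_K$ when $K$ has ghost vertices. I would assume every $i\in[m]$ is a vertex of $K$, so that $\mathcal{R}_K$ is connected, its $1$-skeleton being the full $1$-skeleton of the cube. Alternatively, asphericity can be interpreted componentwise.
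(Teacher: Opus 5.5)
Your proof is correct and matches the paper's: it applies Theorem~\ref{thm:Davis_aspherical} to the constant family $(D^1,S^0)$, checks (i) and (ii) exactly as you do, and reads the equivalence off condition (iii). Your two caveats are legitimate points that the paper passes over silently, and your resolutions of them are sound: the recalled theorem is stated for pairs of connected spaces while $S^0$ is disconnected, and ghost vertices make $\mathcal{R}_K$ disconnected, so asphericity must then be read componentwise.
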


The rigidity results established in this paper rely crucially on the asphericity of real moment-angle manifolds. We now explain why this phenomenon is specific to the real case and fails in the complex and quaternionic settings.

We briefly recall the corresponding constructions (see \cite{tt,ho} for details). Let $K$ be a simplicial complex on $[m]$.

The \emph{complex moment-angle complex} associated to $K$ is defined as the polyhedral product
\[
\mathcal{Z}_K := Z_K(D^2,S^1),
\]
where $D^2 \subset \mathbb{C}$ is the unit disk and $S^1=\partial D^2$.

Similarly, the \emph{quaternionic moment-angle complex} is defined by
\[
\mathcal{Z}_K^{\mathbb{H}} := Z_K(D^4,S^3),
\]
where $D^4 \subset \mathbb{H}$ is the unit $4$--disk and $S^3=\partial D^4$ (\emph{quaternionic torus}).

Both constructions fit into the general framework of polyhedral products \cite{bbhg} and admit natural torus actions. However, unlike the real case, the pairs $(D^2,S^1)$ and $(D^4,S^3)$ fail to satisfy the asphericity conditions in Davis' criterion described above. This leads to the following.

\begin{lemma}
\label{lem:complex-not-aspherical}
Let $K$ be a simplicial complex with at least one non-conelike vertex\footnote{A vertex $i$ of a simplicial complex $K$ is called conelike if it is joined by an edge to every other vertex of $K$.}. Then the complex moment-angle complex \(Z_K(D^2,S^1)\) is not aspherical.
\end{lemma}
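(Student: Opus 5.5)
Since $K$ has a non-conelike vertex $i$, we can choose a vertex $j \neq i$ with $\{i,j\}\notin K$. We will show that $Z_K(D^2,S^1)$ has a nontrivial higher homotopy group, namely $\pi_3 \neq 0$. Theorem~\ref{thm:Davis_aspherical} cannot be applied directly in the ``only if'' direction, since condition (ii) fails: $S^1 \to D^2$ is not injective on $\pi_1$. We prefer to give a direct argument rather than rely on the precise form of the converse.

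\textbf{Step 1 (retraction onto a full subcomplex).} Let $L = K_{\{i,j\}}$ be the full subcomplex on the vertices $\{i,j\}$. Since $\{i,j\}$ is a missing face, $L$ consists of two disjoint points. The polyhedral product of a full subcomplex is a retract of the whole: the inclusion $Z_L(D^2,S^1) \hookrightarrow Z_K(D^2,S^1)$, which fills in the basepoint $1\in S^1$ in the remaining coordinates, has as a left inverse the coordinate projection $\prod_{k\in[m]} D^2 \to D^2\times D^2$. This projection sends $Z_K(D^2,S^1)$ into $Z_L(D^2,S^1)$, because for any simplex $I\in K$ the set $I\cap\{i,j\}$ is a simplex of $L$. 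Hence the homotopy groups of $Z_L(D^2,S^1)$ are direct summands of those of $Z_K(D^2,S^1)$.

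\textbf{Step 2 (the two-point case).} For $L$ two disjoint points,
\[
Z_L(D^2,S^1) = D^2\times S^1 \cup S^1\times D^2 = \partial(D^2\times D^2) \cong S^3 .
\]
Then $\pi_3(S^3)\cong\mathbb{Z}\neq 0$. Since $\pi_3$ of a retract injects, $\pi_3(Z_K(D^2,S^1))\neq 0$, so $Z_K(D^2,S^1)$ is not aspherical. The same argument gives $S^7$ in the quaternionic case.

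\textbf{Main point to check.} The only thing requiring care is that the retraction is well defined, which is exactly the fullness observation in Step 1. The identification in Step 2 with the join $S^1 * S^1 \cong S^3$ is standard. Note that the hypothesis is used only to produce the missing edge $\{i,j\}$. If every vertex were conelike, $K$ would be a full simplex and $Z_K$ would be a contractible disk, which is consistent with the statement.
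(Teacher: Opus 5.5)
Your argument is correct, and it takes a genuinely different route from the paper.

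The paper's proof is a one-line appeal to the converse of Davis' criterion (Theorem~\ref{thm:Davis_aspherical}). Since $\pi_1(S^1)\to\pi_1(D^2)$ is not injective, condition (ii) fails, so $Z_K(D^2,S^1)$ cannot be aspherical. That is quick, but it rests on the precise hypotheses under which Davis' converse holds, and the paper's formulation of the theorem does not display them. Read literally, it would make $Z_{\Delta^{m-1}}(D^2,S^1)=(D^2)^m$ non-aspherical. The non-conelike hypothesis in the lemma is only implicitly what places you in the range where the converse is valid.

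Your retraction of $Z_K(D^2,S^1)$ onto $Z_{K_{\{i,j\}}}(D^2,S^1)=\partial(D^2\times D^2)\cong S^3$ has several advantages:
\begin{enumerate}
\item it is self-contained;
\item it shows exactly where the hypothesis is used, namely to produce a missing edge between two actual vertices;
\item it exhibits an explicit nonzero homotopy group;
\item it handles the quaternionic case uniformly.
\end{enumerate}

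It also generalizes at no cost. For a minimal missing face $\sigma$ with $|\sigma|=k\geq 2$, the full subcomplex is $\partial\Delta^{k-1}$. It gives a retract $\partial\bigl((D^2)^k\bigr)\cong S^{2k-1}$. Hence $Z_K(D^2,S^1)$ is aspherical only when $K$ is a simplex on its vertex set.

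This observation also shows that your closing aside is wrong as stated. ``Every vertex conelike'' forces $K$ to be a simplex only when $K$ is flag. For $K=\partial\Delta^2$ every vertex is conelike, yet $Z_K(D^2,S^1)\cong S^5$. The aside plays no role in the proof, so nothing in your argument needs repair.
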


\begin{proof}
Although both $D^2$ and $S^1$ are aspherical, the inclusion
\[
S^1 \hookrightarrow D^2
\]
induces the trivial homomorphism
\[
\pi_1(S^1)\cong \mathbb{Z} \longrightarrow \pi_1(D^2)=0,
\]
which is not injective. Thus $(D^2,S^1)$ is not an aspherical pair,
and so criterion (ii) of Theorem~\ref{thm:Davis_aspherical}) fails.
\end{proof}

In the quaternionic case, we have the following result. 

\begin{lemma}
\label{lem:quaternionic-not-aspherical}
Let $K$ be a simplicial complex with at least one non-conelike vertex. Then the quaternionic moment-angle complex \(Z_K^{\mathbb{H}}(D^4,S^3)\) is not aspherical.
\end{lemma}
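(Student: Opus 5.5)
The quickest route mirrors Lemma~\ref{lem:complex-not-aspherical}: invoke Theorem~\ref{thm:Davis_aspherical} and show the pair $(D^4,S^3)$ violates its hypotheses. But here the failure is in condition (ii) for a different reason than in the complex case. $\pi_1(S^3)=0$, so injectivity of $\pi_1(S^3)\to\pi_1(D^4)$ holds trivially. What fails is asphericity of $B=S^3$ itself, since $\pi_3(S^3)\cong\mathbb{Z}\neq 0$. So the first step is simply to record that condition (ii) fails because $S^3$ is not aspherical.

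The honest difficulty is that Davis' criterion is an ``if and only if'' only with the hypotheses as stated. To conclude non-asphericity cleanly, I would rather exhibit a nontrivial higher homotopy group directly. This is also where the non-conelike hypothesis gets used.

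Take a non-conelike vertex $i$, and choose a vertex $j$ with $\{i,j\}\notin K$. The full subcomplex $K_{\{i,j\}}$ is then two disjoint points. Polyhedral products are functorial for full subcomplexes, and $Z_{K_J}(A,B)$ is a retract of $Z_K(A,B)$: include it using basepoints in $B$ for the coordinates outside $J$, and retract it by projecting onto the $J$-coordinates. Since homotopy groups of a retract inject into those of the ambient space, it suffices to show $Z_{K_{\{i,j\}}}(D^4,S^3)$ is not aspherical.

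That space is
\[
D^4\times S^3\cup S^3\times D^4=\partial(D^4\times D^4)\cong S^7,
\]
which has $\pi_7\cong\mathbb{Z}\neq 0$. Hence $\pi_7(Z_K(D^4,S^3))\neq 0$, so $Z_K(D^4,S^3)$ is not aspherical.

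The only step needing care is the retraction argument. One must check that the projection $\prod_{k\in[m]}A(k)\to\prod_{k\in J}A(k)$ maps $Z_K(A,B)$ into $Z_{K_J}(A,B)$. This holds because $\sigma\cap J$ is a simplex of $K_J$ for every $\sigma\in K$. The same argument, with $S^3$ in place of $S^7$, would also give a direct proof of Lemma~\ref{lem:complex-not-aspherical}.
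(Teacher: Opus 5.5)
Your proof is correct, and it takes a genuinely different route from the paper's. The paper simply invokes Theorem~\ref{thm:Davis_aspherical}: since $\pi_3(S^3)\cong\mathbb{Z}$, the pair $(D^4,S^3)$ is not aspherical, so the criterion fails. (The paper cites criterion (i), but, as you correctly observe, it is (ii) that fails, since $D^4$ is contractible.)

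You instead bypass the ``only if'' direction of Davis' criterion. You retract $Z_K(D^4,S^3)$ onto $Z_{K_{\{i,j\}}}(D^4,S^3)=\partial(D^4\times D^4)\cong S^7$, and this produces an explicit nonzero homotopy group, $\pi_7$. Your check that $\sigma\cap J\in K_J$ is exactly what makes the projection land in the right place.

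The paper's route buys brevity and uniformity with the real case, where the same criterion is used in the positive direction. Yours buys robustness. The criterion as quoted in the paper cannot be used verbatim in the ``only if'' direction: for $K$ a full simplex one gets $Z_K(D^4,S^3)=(D^4)^m$, which is contractible although (ii) fails. Some non-degeneracy hypothesis is therefore indispensable, and the paper's proof never indicates where the non-conelike assumption enters. Your argument uses that assumption precisely to produce the missing edge $\{i,j\}$, and it is self-contained.

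As you note, it also works verbatim for the complex case. In fact it works for every pair $(D^n,S^{n-1})$ with $n\geq 2$, yielding $S^{2n-1}$; for $n=1$ one gets $S^1$, consistent with the real case being aspherical.
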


\begin{proof}
The boundary $S^3$ is not aspherical, since $\pi_3(S^3)\cong \mathbb{Z}$. Hence $(D^4,S^3)$ is not an aspherical pair, and so criterion (i) of Theorem~\ref{thm:Davis_aspherical} fails.
\end{proof}

\medskip

From this point on, we will assume that $K$ is a flag complex. We will see that under this assumption, the cubical structure on $\mathcal{R}_K$ satisfies Gromov's link condition, and its universal cover admits a CAT(0) metric.

\section{CAT(0) geometry and right-angled Coxeter groups}
\label{sec:cat0}

In this section we establish that the universal cover of a real moment-angle manifold associated to a flag complex admits a natural CAT(0) metric, and that its fundamental group is a CAT(0)-group in the sense of Bartels--Lück \cite{BartelsLuck2012}. This provides the geometric input required for topological rigidity.

Let $K$ be a finite simplicial complex on the vertex set $I\subseteq [m]=\{1,\dots,m\}$. Associated to the $1$-skeleton of $K$ is the right-angled Coxeter group
\[
W_K=\langle s_i \ (i\in I)\mid s_i^2=1,\ (s_is_j)^2=1 \text{ whenever } \{i,j\}\in K\rangle .
\]

Following \cite{dav, rafca}, the real moment--angle complex
\[
\mathcal{R}_K=Z_K(D^1,S^0)
\]
inherits a cubical structure from the cube $[-1,1]^m$ (also known as \emph{corner structure}),  
\[
\mathcal{R}_K=\{M_i\}_{i\in I}
\]
where \(M_i := \{ x \in \mathcal{R}_K \mid x_i = 0 \}\). Each $M_i$ is a codimension-one subcomplex, which we call \emph{mirror} in the sense of Coxeter group theory (for details see \cite[\S 1.1]{rafca}).

The associated basic construction identifies the universal cover of $\mathcal{R}_K$ with a $W_K$-space built from the canonical chamber attached to $K$, in the following way:

\paragraph*{Basic construction.} Consider the chamber
\[
K(K) := \mathcal{R}_K.
\]
The mirrors $M_i$ define a mirror structure on $K(K)$ indexed by $I$.
Following \cite{dav} and \cite[\S 1.5]{rafca}, one defines the basic construction
\[
U(W_K,K(K)) := (W_K \times K(K)) / \sim,
\]
where $(w,x) \sim (w',x)$ if $w^{-1}w'$ lies in the subgroup generated by $\{ s_i \mid x \in M_i \}$.

By construction, $W_K$ acts on $U(W_K,K(K))$ by left multiplication, and $K(K)$ is a fundamental domain for this action. Notice that if $K$ is flag, then $K$ is the nerve of $W_K$, and the chamber $K(K)$ is the standard \emph{Davis chamber} and therefore this space coincides with the \emph{Davis complex} $\Sigma(W_K,S)$ as the basic construction
\[
\Sigma_K:=\Sigma(W_K,S)=U(W_K,K(K))=(W_K\times K(K))/\sim.
\]

\begin{prop}
\label{prop:universal-cover-davis}
Let $K$ be a finite simplicial complex. Then the universal cover of $\mathcal{R}_K$ is $W_K$-equivariantly homeomorphic to the basic construction
\[
U(W_K,K(K)),
\]
where $K(K)$ denotes the canonical cubical chamber associated to $K$.
\end{prop}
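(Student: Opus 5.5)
The plan is to pass through the quotient of $\mathcal{R}_K$ by $(\mathbb{Z}_2)^m$, since the chamber is most naturally the orbit space. I take the canonical cubical chamber $K(K)$ to be $\mathrm{cc}(K)\cong|\mathrm{cone}\,K|\subset[0,1]^m$. Its mirrors are $F_i:=\mathrm{cc}(K)\cap\{y_i=0\}$, which is exactly the locus where $\rho$ fails to see the sign of the $i$-th coordinate. The argument has three steps.

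\textbf{Step 1: the abelian basic construction.} First I would show that $\mathcal{R}_K\cong U((\mathbb{Z}_2)^m,\mathrm{cc}(K))$ equivariantly. Consider the map
\[
(\varepsilon,y)\longmapsto \bigl(\varepsilon_1\sqrt{y_1},\dots,\varepsilon_m\sqrt{y_m}\bigr)
\]
from $(\mathbb{Z}_2)^m\times\mathrm{cc}(K)$ to $\mathcal{R}_K$. It is surjective by the pullback description of $\mathcal{R}_K$. Two pairs $(\varepsilon,y)$ and $(\varepsilon',y)$ have the same image exactly when $\varepsilon^{-1}\varepsilon'$ is supported on $\{i: y_i=0\}=\{i: y\in F_i\}$, which is precisely the relation defining the basic construction. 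Compactness and the Hausdorff property then give a homeomorphism.

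\textbf{Step 2: realizing $\mathcal{R}_K$ as a quotient of $U(W_K,\mathrm{cc}(K))$.} Let $\phi\colon W_K\to(\mathbb{Z}_2)^m$ be the surjection $s_i\mapsto e_i$ (the abelianization), and let $\Gamma=\ker\phi$. The basic construction is natural in the group: $U(W_K,\mathrm{cc}(K))/\Gamma\cong U(W_K/\Gamma,\mathrm{cc}(K))$, because the mirror relation for $W_K$ maps onto the mirror relation for $(\mathbb{Z}_2)^m$. Here one uses that, for each point $y$, the special subgroup $W_{\sigma(y)}$ with $\sigma(y)=\{i:y\in F_i\}$ maps isomorphically onto $(\mathbb{Z}_2)^{\sigma(y)}$. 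Note that $\sigma(y)$ is always a simplex of $K$.

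That same injectivity shows that $\Gamma$ meets every conjugate of every spherical special subgroup trivially. Since $\mathrm{cc}(K)$ is compact, it follows that $\Gamma$ acts freely and properly on $U(W_K,\mathrm{cc}(K))$. Hence the quotient map $U(W_K,\mathrm{cc}(K))\to\mathcal{R}_K$ is a regular covering with deck group $\Gamma$.

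\textbf{Step 3: simple connectivity.} The remaining, and in my view main, step is to show that $U(W_K,\mathrm{cc}(K))$ is simply connected, so that it is the universal cover and $\pi_1(\mathcal{R}_K)\cong\Gamma$. I would invoke Davis' criterion (cf.\ \cite{dav}): the basic construction $U(W,X)$ is simply connected when the following hold.
\begin{enumerate}
\item[(a)] $X$ is simply connected.
\item[(b)] Each mirror $X_s$ is nonempty and path connected.
\item[(c)] $X_s\cap X_t\neq\emptyset$ whenever $m_{st}<\infty$.
\end{enumerate}
Each of these is checked directly for $\mathrm{cc}(K)$:
\begin{enumerate}
\item[(a)] $\mathrm{cc}(K)$ is a cone, hence contractible.
\item[(b)] $F_i$ is the cubical dual of the star of vertex $i$, a cone on $\mathrm{cc}(\mathrm{lk}\,i)$, hence contractible.
\item[(c)] $F_i\cap F_j\neq\emptyset$ exactly when $\{i,j\}\in K$, which is exactly when $m_{ij}=2$.
\end{enumerate}

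If citing the criterion feels too opaque, my fallback is a direct argument. I would cover $U$ by the chambers $wX$ and apply van Kampen to the chamber graph, whose fundamental group is generated by loops around codimension-two faces. The relations $s_i^2=1$ and $(s_is_j)^2=1$ of $W_K$ are exactly what kill those loops.

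Flagness is not used anywhere in this proposition. It will only enter later, to identify $U(W_K,\mathrm{cc}(K))$ with $\Sigma_K$ and to verify Gromov's link condition.
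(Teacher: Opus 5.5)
Your argument is correct, but it follows a different route from the paper's, and the difference matters.

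\textbf{The paper's route.} It takes the chamber to be $K(K):=\mathcal{R}_K$ itself, with mirrors $M_i=\{x\in\mathcal{R}_K : x_i=0\}$. It defines $\pi([w,x])=w\cdot x$ via the sign-change action. It then simply asserts that $\pi$ is a local homeomorphism, and that $U(W_K,K(K))$ is simply connected because it is glued from contractible chambers.

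\textbf{Your route.} You use the orbit space $\mathrm{cc}(K)\cong\mathcal{R}_K/(\mathbb{Z}_2)^m$ as chamber. You identify $\mathcal{R}_K$ with the abelian basic construction $U((\mathbb{Z}_2)^m,\mathrm{cc}(K))$. You obtain the covering from naturality of $U$ in the group together with freeness of $\Gamma=\ker(W_K\to(\mathbb{Z}_2)^m)$. Finally you check Davis's simple-connectivity hypotheses explicitly.

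\textbf{Why the chamber matters.} Your choice is what makes the paper's two assertions true. With $\mathcal{R}_K$ as chamber, three things go wrong. The chamber is not contractible; for the pentagon it is a genus-$5$ surface. The retraction $[w,x]\mapsto x$ makes $\pi_1(\mathcal{R}_K)$ a retract of $\pi_1(U)$, so $U$ is not simply connected. And $\pi$ is two-to-one near a generic point of $M_i$, so it is not a local homeomorphism. Your chamber is also the one consistent with the paper's own remark that $K(K)$ is the standard Davis chamber.

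\textbf{What each approach buys.} The paper's version is shorter, but as written it does not go through. Yours gives three things in addition:
\begin{enumerate}
\item[(1)] It identifies the deck group as $\Gamma=[W_K,W_K]$. Proposition~\ref{prop:cat0_action} tacitly uses this when it treats $G$ as a subgroup of the Coxeter group.
\item[(2)] It exhibits the $W_K$-action on the universal cover as a lift of the $(\mathbb{Z}_2)^m$-action, which is the actual content of ``$W_K$-equivariantly''.
\item[(3)] It shows that flagness plays no role in this proposition, consistent with its hypotheses.
\end{enumerate}

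\textbf{Two small corrections.} First, $F_i\cong\mathrm{cc}(\mathrm{lk}\,i)$ is itself a cone, not a cone on $\mathrm{cc}(\mathrm{lk}\,i)$; contractibility is unaffected. Second, condition (b) requires every $i\in[m]$ to be a vertex of $K$. This is needed anyway for $\mathcal{R}_K$ to be connected.
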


\begin{proof}
We construct a covering map
\[
\pi : U(W_K,K(K)) \longrightarrow \mathcal{R}_K.
\]
Define $\pi([w,x]) = w \cdot x$, where $w$ acts via the reflections described above. This is well-defined because if $(w,x) \sim (w',x)$, then $w^{-1}w'$ fixes $x$ by definition of the equivalence relation.

The map $\pi$ is continuous, surjective, and locally a homeomorphism, since locally it corresponds to gluing chambers along mirrors. Moreover, $U(W_K,K(K))$ is connected and simply connected: it is obtained by gluing contractible chambers along codimension-one subspaces in the standard Coxeter manner, and this construction yields a simply connected space (see \cite[Example 2.2]{rafca} and references therein). Thus $\pi$ is the universal covering map, and
\[
\widetilde{\mathcal{R}_K} \cong U(W_K,K(K)).
\]
\end{proof}

\medskip

We now describe the curvature properties of this space.

\begin{prop}
\label{prop:cat0_action}
Let $K$ be a finite flag simplicial complex such that $\mathcal{R}_K$ is a closed manifold. Then the group
\[
G=\pi_1(\mathcal{R}_K)
\]
acts properly, cocompactly, and isometrically on a finite-dimensional CAT(0) space.
\end{prop}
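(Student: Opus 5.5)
The plan is to use the space $X=\widetilde{\mathcal{R}_K}$ itself, with the piecewise Euclidean cubical metric pulled back from $\mathcal{R}_K$, and to let $G=\pi_1(\mathcal{R}_K)$ act by deck transformations. Concretely, give $\mathcal{R}_K\subset[-1,1]^m$ the cube complex structure in which each cell $\prod_{i\in I}D^1\times\prod_{j\notin I}\{\varepsilon_j\}$ is a Euclidean cube of side length $2$. This structure lifts to a $G$-invariant cube complex structure on $X$, and $G$ acts freely by cubical isometries. By Bridson's theorem there are finitely many isometry types of cells, since all cells are cubes of dimension at most $\dim K+1$. Hence $X$ is a complete geodesic metric space, of finite dimension $\dim K+1$.

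The key step is nonpositive curvature. I will check Gromov's link condition at each vertex of $\mathcal{R}_K$. A vertex is a point $\varepsilon\in\{\pm1\}^m$. The cubes containing $\varepsilon$ are exactly $\prod_{i\in I}D^1\times\prod_{j\notin I}\{\varepsilon_j\}$ with $I\in K$. Hence the link of $\varepsilon$ is the all-right spherical simplicial complex whose simplices correspond to the nonempty $I\in K$, and this is isomorphic to $K$ itself. Since $K$ is flag, Gromov's criterion shows that each vertex link is CAT(1), so $\mathcal{R}_K$ is locally CAT(0). Links in the cover are identical, because the covering map is a local isometry.

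I then apply the Cartan--Hadamard theorem (Bridson--Haefliger II.4.1). A complete, connected, simply connected, locally CAT(0) geodesic space is CAT(0). So $X$ is CAT(0). Alternatively, Proposition \ref{prop:universal-cover-davis} identifies $X$ with the Davis complex $\Sigma_K$, which is CAT(0) by Moussong's theorem in the flag case; I would mention this only as a consistency check.

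Finally, the action properties. Deck transformations act isometrically and freely, and properly discontinuously since $X\to\mathcal{R}_K$ is a covering. Cocompactness holds because $\mathcal{R}_K=X/G$ is compact, being a finite cube complex. The one step needing care is completeness of $X$ and the fact that the lifted metric is a genuine geodesic metric. This follows from the finiteness of the set of cell shapes (Bridson), which applies to $X$ because it has only the shapes appearing in $\mathcal{R}_K$. The manifold hypothesis is not actually needed for this proposition.
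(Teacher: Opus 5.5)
Your proof is correct. It uses the same ingredients as the paper: Gromov's link condition with every vertex link isomorphic to $K$, simple connectivity, and the deck-transformation action. The difference is in how you route the argument.

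The paper first invokes Proposition~\ref{prop:universal-cover-davis} to identify $\widetilde{\mathcal{R}_K}$ with the Davis complex $\Sigma_K$. It then reads off the vertex links there and gets isometric invariance by viewing $G$ as a subgroup of $W_K$ that preserves the cubical metric.

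You compute the links downstairs, in the cube structure of $\mathcal{R}_K\subset[-1,1]^m$: for each $I\in K$ there is exactly one cube of type $I$ through a vertex $\varepsilon$. You then pull the metric back and finish with Bridson's theorem and Cartan--Hadamard, so isometric invariance of the deck group is automatic.

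Your route is more self-contained. It needs no basic construction and no identification of $G$ inside $W_K$. In particular it does not depend on Proposition~\ref{prop:universal-cover-davis}, which as written takes $\mathcal{R}_K$ itself (a closed, hence non-contractible, manifold) as the chamber. The identification with $\Sigma_K$ actually holds for the contractible chamber $\mathrm{cc}(K)$, with mirrors $\mathrm{cc}(K)\cap\{x_i=0\}$.

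The paper's route, done with the correct chamber, gives structural information yours does not: $G$ is the kernel of $W_K\to(\mathbb{Z}_2)^m$, a finite-index subgroup of a right-angled Coxeter group.

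One wording slip: finiteness of shapes is the hypothesis of Bridson's theorem, not its conclusion. The theorem then tells you that $X$ is a complete geodesic space.
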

\begin{proof}
By Proposition~\ref{prop:universal-cover-davis}, the universal cover $\widetilde{\mathcal{R}_K}$ is identified with the Davis complex $\Sigma_K$.

In the right-angled case, $\Sigma_K$ admits a natural piecewise Euclidean cubical metric in which each cube is isometric to a standard Euclidean cube. The link of each vertex is isomorphic to the simplicial complex $K$.

If $K$ is flag, then every vertex link is a flag simplicial complex. Hence, by Gromov's link condition, $\Sigma_K$ is nonpositively curved. Since $\Sigma_K$ is simply connected, it follows that it is a CAT(0) space.

The action of $G=\pi_1(\mathcal{R}_K)$ on $\widetilde{\mathcal{R}_K} \cong \Sigma_K$ is the deck transformation action. Because $\mathcal{R}_K$ is compact, this action is properly discontinuous and cocompact. Moreover, the cubical metric is invariant under the Coxeter group action and hence under its subgroup $G$, so the action is by isometries.

Finally, $\Sigma_K$ is finite-dimensional since $K$ is finite.
\end{proof}

\begin{remark}[General Coxeter Case]
Moussong's theorem \cite{moussong} shows that the Davis complex of any Coxeter system admits a CAT(0) metric. However, in the general (non-right-angled) case, this metric is polyhedral rather than cubical, and the CAT(0) property is not determined purely by the combinatorics of the nerve.

In contrast, in the right-angled case, the cubical structure allows one to characterize nonpositive curvature entirely in terms of the flag condition on $K$. This combinatorial control is essential for the connection with real moment-angle manifolds.
\end{remark}

\section{Farrell--Jones and topological rigidity}
\label{sec:FJ}

In this section we combine the CAT(0) geometry established above with results of Bartels and L\"uck \cite{BartelsLuck2012} to deduce topological rigidity of real moment--angle manifolds.

\medskip

Let $K$ be a finite flag simplicial complex such that
\[
\mathcal{R}_K = Z_K(D^1,S^0)
\]
is a closed manifold of dimension $n \geq 5$, and let
\[
G = \pi_1(\mathcal{R}_K).
\]

We briefly recall that the Farrell--Jones conjecture in $L$-theory predicts that for a group $G$, the assembly map
\[
H_n(BG;\mathbf{L}^{\langle -\infty \rangle})
\longrightarrow
L_n^{\langle -\infty \rangle}(\mathbb{Z}G)
\]
is an isomorphism for all $n$ (see \cite{lr05}).

\begin{lemma}
\label{lem:FJ_CAT0}
The group $G$ satisfies the Farrell--Jones conjecture in $L$-theory.
\end{lemma}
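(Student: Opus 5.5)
The plan is to derive the lemma directly from Proposition~\ref{prop:cat0_action} together with the theorem of Bartels and L\"uck \cite{BartelsLuck2012}. That theorem states that every group acting properly, cocompactly and isometrically on a finite-dimensional CAT(0) space satisfies the $K$-theoretic Farrell--Jones conjecture up to dimension one and the full $L$-theoretic Farrell--Jones conjecture, with coefficients in any additive category and relative to the family $\mathcal{VCyc}$ of virtually cyclic subgroups. So the first step is to check that the hypotheses of that theorem hold for $G=\pi_1(\mathcal{R}_K)$. This is the content of Proposition~\ref{prop:cat0_action}: $G$ acts by deck transformations on $\widetilde{\mathcal{R}_K}\cong\Sigma_K$. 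This space is CAT(0) by Gromov's link condition, since $K$ is flag. It is finite-dimensional because $K$ is finite. The action is proper and cocompact because $\mathcal{R}_K$ is compact.

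The second step is to pass from the version of the conjecture proved by Bartels--L\"uck to the form recalled above, namely that
\[
H_n(BG;\mathbf{L}^{\langle -\infty \rangle}) \to L_n^{\langle -\infty \rangle}(\mathbb{Z}G)
\]
is an isomorphism. The Bartels--L\"uck result gives an isomorphism
\[
H_n^G(E_{\mathcal{VCyc}}G;\mathbf{L}^{\langle -\infty\rangle}_{\mathbb{Z}}) \to L_n^{\langle -\infty \rangle}(\mathbb{Z}G).
\]
To reduce the family from $\mathcal{VCyc}$ to the trivial family, I would use that $G$ is torsion-free, which holds because $\mathcal{R}_K$ is a finite-dimensional aspherical space by Corollary~\ref{cor:real_MA_aspherical}. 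For torsion-free $G$, every nontrivial virtually cyclic subgroup is infinite cyclic. In $L^{\langle-\infty\rangle}$-theory the passage from $\mathcal{VCyc}$ to the family of cyclic subgroups is an isomorphism, since the UNil terms vanish for infinite cyclic groups (they only arise from infinite dihedral groups). The passage from cyclic subgroups to the trivial family is then handled by the Bass--Heller--Swan/Shaneson splitting $L(\mathbb{Z}[\mathbb{Z}])\simeq L(\mathbb{Z})\oplus \Sigma L(\mathbb{Z})$, which shows that the relative terms vanish; see \cite{lr05}. Finally, with the trivial family we have $E_{\{1\}}G=EG$, and $H^G_n(EG;\mathbf{L})\cong H_n(BG;\mathbf{L})$, which gives exactly the stated assembly map.

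The argument is essentially a citation, so the only real obstacle is the bookkeeping in this family reduction: one has to make sure that the decorations and families match the formulation in the surgery application. I expect to handle this by quoting the standard reduction for torsion-free groups from \cite{lr05}. One should also be careful that Bartels--L\"uck require a \emph{proper} action in the metric sense, i.e.\ finitely many group elements move a given ball back into itself. This holds for the cocompact deck action on a locally finite cube complex, so I would state it explicitly.
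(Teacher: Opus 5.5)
Your proposal is correct and is the same argument as the paper's, which simply feeds Proposition~\ref{prop:cat0_action} into the Bartels--L\"uck theorem. Your extra step, reducing from $\mathcal{VCyc}$ to the trivial family via torsion-freeness of $G$, is correct bookkeeping that the paper leaves implicit: for torsion-free $G$ the family $\mathcal{VCyc}$ already consists only of the trivial and infinite cyclic subgroups, so the UNil remark is vacuous and only the Shaneson splitting is actually needed.
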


\begin{proof}
By Proposition~\ref{prop:cat0_action}, the group $G$ acts properly, cocompactly, and isometrically on the finite-dimensional CAT(0) space $\widetilde{\mathcal{R}_K}$. Hence $G$ satisfies the Farrell--Jones conjecture by Bartels--L\"uck \cite{BartelsLuck2012}.
\end{proof}

\medskip

The following result is a standard consequence of the Farrell--Jones conjecture in surgery theory. Recall that the (topological) structure set $\mathcal{S}(M)$ of a closed manifold $M$ consists of equivalence classes of homotopy equivalences $f \colon N \to M$, where $N$ is a closed manifold, with two such maps considered equivalent if they differ by a homeomorphism of $N$ homotopic to the identity. In particular, $\mathcal{S}(M)$ measures the failure of homotopy equivalence to imply homeomorphism.

It is a standard consequence of surgery theory that $\mathcal{S}(M)$ fits into the surgery exact sequence, and that the vanishing of the relevant surgery obstruction groups implies that $\mathcal{S}(M)$ is trivial (see, e.g., \cite{wall70, lm24}).

\begin{lemma}
\label{lem:FJ_Borel}
Let $M$ be a closed aspherical manifold of dimension $n \geq 5$ with fundamental group $G$. If $G$ satisfies the Farrell--Jones conjecture in $L$-theory, then the structure set $\mathcal{S}(M)$ is trivial. In particular, $M$ satisfies the Borel Conjecture.
\end{lemma}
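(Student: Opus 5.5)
The plan is to run the topological surgery exact sequence for $M$ and show, using the Farrell--Jones conjecture together with asphericity, that the map from normal invariants to $L$-groups is injective in degree $n$ and surjective in degree $n+1$. Exactness then forces $\mathcal{S}(M)$ to be a single point. Since $n\geq 5$, the Sullivan--Wall sequence (in Ranicki's algebraic form) is available:
\[
\cdots \to H_{n+1}(M;\mathbf{L}\langle 1\rangle) \xrightarrow{A_{n+1}} L_{n+1}^{s}(\mathbb{Z}G) \to \mathcal{S}(M) \to H_n(M;\mathbf{L}\langle 1\rangle) \xrightarrow{A_n} L_n^{s}(\mathbb{Z}G).
\]
Here $H_*(M;\mathbf{L}\langle 1\rangle)\cong[M,G/\mathrm{TOP}]$ are the normal invariants, and $A$ is the assembly map. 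Because $M$ is aspherical, $M\simeq BG$, so these homology groups are exactly the source of the assembly map in the Farrell--Jones conjecture, up to two discrepancies, which I handle next.

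\textbf{Step 1: decorations.} The conjecture as stated uses $\langle -\infty\rangle$ decorations, whereas the surgery sequence uses $s$. I would invoke the $K$-theoretic Farrell--Jones conjecture, which Bartels--L\"uck also establish for CAT(0) groups. For torsion-free $G$ it gives $\mathrm{Wh}(G)=0$, $\tilde K_0(\mathbb{Z}G)=0$ and $K_{-i}(\mathbb{Z}G)=0$. Torsion-freeness holds because $G$ is the fundamental group of a finite-dimensional aspherical manifold. The Rothenberg sequences then identify $L^{s}=L^{h}=L^{p}=\dots=L^{\langle-\infty\rangle}$.

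\textbf{Step 2: connective cover.} Compare $\mathbf{L}\langle 1\rangle$ with the periodic spectrum $\mathbf{L}$. The cofiber of $\mathbf{L}\langle 1\rangle\to\mathbf{L}$ only sees $L_0(\mathbb{Z})=\mathbb{Z}$ in degree $0$ and below. An Atiyah--Hirzebruch argument on the $n$-dimensional manifold $M$ then shows that $H_n(M;\mathbf{L}\langle1\rangle)\to H_n(M;\mathbf{L})$ is injective and $H_{n+1}(M;\mathbf{L}\langle1\rangle)\to H_{n+1}(M;\mathbf{L})$ is an isomorphism; the $n$-dimensional case is the standard computation that the difference is a copy of $H_0(M;L_0(\mathbb{Z}))\cong\mathbb{Z}$ accounted for by the degree-$n$ comparison. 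Composing with the Farrell--Jones isomorphism $H_*(BG;\mathbf{L})\cong L_*(\mathbb{Z}G)$ gives the required conclusions: $A_{n+1}$ is surjective and $A_n$ is injective.

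\textbf{Step 3: conclusion.} Exactness now yields $\mathcal{S}(M)=\{\ast\}$. Hence every homotopy equivalence $N\to M$ is homotopic to a homeomorphism, which is the Borel Conjecture for $M$.

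\textbf{Main obstacle.} I expect the hardest part to be the bookkeeping in Step 2, namely the $\mathbf{L}\langle1\rangle$ versus $\mathbf{L}$ discrepancy in degree $n$. In the literature this is handled by citing the precise form in L\"uck's survey, and I would cite that rather than redo the spectral sequence.
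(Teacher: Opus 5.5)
Your argument follows the paper's route (surgery exact sequence, $M\simeq BG$, Farrell--Jones assembly isomorphism), and it is more complete than the paper's proof. The paper only asserts that the $L^{\langle-\infty\rangle}$ assembly isomorphism makes $\mathcal{S}(M)$ a point. You actually handle the change of decorations and the $\mathbf{L}\langle 1\rangle$ versus $\mathbf{L}$ comparison, and your Atiyah--Hirzebruch conclusions (isomorphism in degree $n+1$, injectivity in degree $n$) are right. You also use surjectivity of $A_{n+1}$, which the paper's ``kernel of the surgery obstruction map'' description leaves out.

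The one point to make explicit is that Step 1 uses a hypothesis the lemma does not contain. The lemma assumes only the $L$-theoretic conjecture, for an arbitrary $G$. Your justification of the $K$-theoretic conjecture via Bartels--L\"uck applies only when $G$ is CAT(0); there they prove it in degrees $\le 1$, which is exactly what you need. So what you have proved is the lemma under the additional hypothesis that $G$ also satisfies the $K$-theoretic Farrell--Jones conjecture.

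This extra input is not an artifact of your approach. The $L^{\langle-\infty\rangle}$ isomorphism by itself says nothing about $\mathrm{Wh}(G)$, $\tilde K_0(\mathbb{Z}G)$ or $K_{-i}(\mathbb{Z}G)$. Without $\mathrm{Wh}(G)=0$ you can neither identify $L^s$ with $L^{\langle-\infty\rangle}$ nor, in Step 3, know that an arbitrary homotopy equivalence $N\to M$ is simple and so defines an element of $\mathcal{S}^s(M)$. The paper's sketch passes over this silently. You should state the extra hypothesis openly and note that it holds in the paper's application to $\pi_1(\mathcal{R}_K)$.

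Two minor slips. The degree-$n$ discrepancy is $H_n(M;L_0(\mathbb{Z}))\cong\mathbb{Z}$ (the top homology, Poincar\'e dual to $H^0$), not $H_0$. For non-orientable $M$, the homology and $L$-groups should be twisted by the orientation character.
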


\begin{proof}
By the Farrell--Jones conjecture, the $L$-theoretic assembly map
\[
H_n(BG; \mathbf{L}^{\langle -\infty \rangle})
\longrightarrow
L_n^{\langle -\infty \rangle}(\mathbb{Z}G)
\]
is an isomorphism. Since $M$ is aspherical, $M \simeq BG$, and the surgery exact sequence identifies the structure set $\mathcal{S}(M)$ with the kernel of the surgery obstruction map.

The assembly isomorphism implies that the relevant obstruction groups are completely determined by homology, and in particular that the structure set reduces to a single element. Hence any homotopy equivalence $f : N \to M$ is homotopic to a homeomorphism.

See \cite{BartelsLuck2012} for details.
\end{proof}

\medskip

We can now state the main result.

\begin{thm}
\label{thm:Borel_main}
Let $K$ be a finite flag simplicial complex such that $\mathcal{R}_K$ is a closed manifold of dimension $n \geq 5$. Then $\mathcal{R}_K$ is aspherical and satisfies the Borel Conjecture.
\end{thm}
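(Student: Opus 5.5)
The plan is to assemble results already established earlier in the paper; the theorem is essentially a composition of three steps. First, asphericity: since $K$ is flag and $\mathcal{R}_K = Z_K(D^1,S^0)$, Corollary~\ref{cor:real_MA_aspherical} gives directly that $\mathcal{R}_K$ is aspherical. As an independent check, I would also note that Propositions~\ref{prop:universal-cover-davis} and~\ref{prop:cat0_action} identify $\widetilde{\mathcal{R}_K}$ with the CAT(0) cube complex $\Sigma_K$. CAT(0) spaces are contractible by geodesic contraction, so this again yields asphericity and gives $\mathcal{R}_K \simeq BG$ with $G=\pi_1(\mathcal{R}_K)$.

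Second, the Farrell--Jones input. By Proposition~\ref{prop:cat0_action}, $G$ acts properly, cocompactly and isometrically on the finite-dimensional CAT(0) space $\Sigma_K$. Lemma~\ref{lem:FJ_CAT0} then states that $G$ satisfies the $L$-theoretic Farrell--Jones conjecture via Bartels--L\"uck. Here I would point out that $G$ is torsion-free, since it is the fundamental group of a finite-dimensional aspherical manifold. Consequently, the family of virtually cyclic subgroups reduces, and the assembly map takes the form $H_n(BG;\mathbf{L}^{\langle -\infty\rangle}) \to L_n^{\langle -\infty\rangle}(\mathbb{Z}G)$, as recalled before Lemma~\ref{lem:FJ_CAT0}. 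One also needs the $K$-theoretic part, namely the vanishing of $\mathrm{Wh}(G)$, $\widetilde K_0(\mathbb{Z}G)$ and $K_{-i}(\mathbb{Z}G)$. This follows from the $K$-theoretic Farrell--Jones conjecture, which Bartels--L\"uck also prove for CAT(0) groups. It lets one pass from decoration $\langle -\infty\rangle$ to the simple decoration $s$ relevant to the topological structure set, and it makes every homotopy equivalence simple.

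Third, I would apply Lemma~\ref{lem:FJ_Borel} with $M=\mathcal{R}_K$, which is a closed aspherical manifold of dimension $n\ge 5$ whose fundamental group satisfies Farrell--Jones. That lemma gives $\mathcal{S}(\mathcal{R}_K)$ trivial. Hence every homotopy equivalence $f\colon N\to \mathcal{R}_K$ from a closed manifold is homotopic to a homeomorphism, which is the Borel Conjecture for $\mathcal{R}_K$.

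The main obstacle is the surgery-theoretic bookkeeping inside the third step. One must use the algebraic surgery sequence of Ranicki and verify that the assembly map with $1$-connective $\mathbf{L}\langle 1\rangle$ coefficients is injective in degree $n+1$ and bijective in degree $n$. Since $M$ is a closed manifold, the passage from $\mathbf{L}$ to $\mathbf{L}\langle 1\rangle$ only affects the top degrees in a controlled way. If I had to fill this in myself, I would cite the standard argument of Bartels--L\"uck, or L\"uck's survey, rather than reprove it, taking care that the $K$-theory vanishing above justifies the change of decorations. Everything else is a direct citation of results already stated in the paper.
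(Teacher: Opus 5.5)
Your proposal is correct and follows the paper's proof exactly: asphericity comes from Corollary~\ref{cor:real_MA_aspherical}, the $L$-theoretic Farrell--Jones conjecture from Lemma~\ref{lem:FJ_CAT0}, and triviality of the structure set from Lemma~\ref{lem:FJ_Borel}, while your remarks on torsion-freeness and on the $K$-theoretic input fill in details the paper leaves implicit (Bartels--L\"uck prove the $K$-theoretic part only in degrees $\le 1$, but that is all you need for $\mathrm{Wh}(G)$, $\widetilde K_0(\mathbb{Z}G)$ and $K_{-i}(\mathbb{Z}G)$ to vanish). One slip in your bookkeeping: the $\mathbf{L}\langle 1\rangle$-assembly must be, and is, bijective in degree $n+1$ and injective in degree $n$, not the other way round; in degree $n$ it is generally not surjective, e.g.\ for $M=T^n$ the $L_0(\mathbb{Z})$ summand of $L_n(\mathbb{Z}[\mathbb{Z}^n])$ is not hit by normal invariants.
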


\begin{proof}
By Corollary~\ref{cor:real_MA_aspherical}, the flag condition on $K$ implies that $\mathcal{R}_K$ is aspherical. By Lemma~\ref{lem:FJ_CAT0}, the group $G=\pi_1(\mathcal{R}_K)$ satisfies the Farrell--Jones conjecture. The result now follows from Lemma~\ref{lem:FJ_Borel}.
\end{proof}

\begin{remark}
The rigidity established in this paper differs from the equivariant rigidity results for Coxeter group actions due to Prassidis and Spieler \cite{prasspieler} (compare also with \cite{Rosas88}). In their setting, one assumes the presence of a Coxeter group $W$ acting on manifolds $M$ and $M'$, and shows that a $W$-equivariant homotopy equivalence $f : M \to M'$ is homotopic to a $W$-equivariant homeomorphism.

In contrast, our result is non-equivariant: no group action is assumed on the target manifold, and rigidity is deduced purely from the properties of the fundamental group. The proof relies on the CAT(0) structure of the universal cover and the validity of the Farrell--Jones conjecture for the corresponding group.

These two approaches highlight different mechanisms underlying rigidity. The equivariant theory depends on the combinatorial and geometric structure of reflection group actions, whereas the present approach shows that, for real moment-angle manifolds, topological rigidity is a consequence of the large-scale geometry of the fundamental group.
\end{remark}

\paragraph*{Further discussion.}
The argument presented in this paper suggests a possible extension beyond real moment-angle manifolds. More generally, one may consider polyhedral products $Z_K(A,B)$ satisfying the asphericity conditions of Davis. In this setting, asphericity alone is not sufficient to ensure rigidity: additional geometric structure on the universal cover is required.

In the real case, the pair $(D^1,S^0)$ gives rise to a natural cubical structure whose universal cover can be identified with the Davis complex, leading to a CAT(0) metric and a proper cocompact group action. For general pairs $(A,B)$, such a structure need not be available, and the connection with CAT(0) geometry becomes more subtle.

It is therefore natural to ask \emph{which aspherical polyhedral products admit geometric structures strong enough to place their fundamental groups within the scope of the Farrell--Jones conjecture}. This suggests a broader problem of identifying classes of polyhedral products for which rigidity phenomena analogous to those established here hold.

\end{document}